\documentclass[12pt,leqno]{article}
%\linespread{1.6}
\pagestyle{plain}
\usepackage{amsmath, amsthm, amsfonts, amssymb, color}
\usepackage{mathrsfs}
\setlength{\topmargin}{0cm} \setlength{\oddsidemargin}{0cm}
\setlength{\evensidemargin}{0cm} \setlength{\textwidth}{16truecm}
\setlength{\textheight}{22.5truecm}

\newtheorem{thm}{Theorem}[section]
\newtheorem{cor}[thm]{Corollary}

\newtheorem{prp}[thm]{Proposition}

\theoremstyle{definition}

\newcommand{\scr}[1]{\mathscr #1}
\definecolor{wco}{rgb}{0.5,0.2,0.3}

\numberwithin{equation}{section} \theoremstyle{remark}

\newcommand{\ua}{\uparrow}

\title{{\bf Harnack Inequalities and Applications for Ornstein-Uhlenbeck Semigroups with Jump}\footnote{Supported in
 part by WIMICS, NNSFC(10721091), the 973-Project, the DFG through
SFB-701 and IRTG 1132.} }
\author{
{\bf Shun-Xiang Ouyang$^{a),b)}$, Michael R\"ockner$^{b)}$, Feng-Yu Wang$^{a),c)}$}\footnote{Corresponding author.
wangfy@bnu.edu.cn; F.Y.Wang@swansea.ac.uk}\\
\footnotesize{$^{a)}$School of Math. Sci. \& Lab. Math. Com. Sys.,
Beijing Normal
University, Beijing 100875, China}\\
\footnotesize{$^{b)}$Department of Mathematics, Bielefeld
University, D-33501 Bielefeld, Germany}\\
\footnotesize{$^{c)}$Department of Mathematics, Swansea University,
Singleton Park, SA2 8PP, UK} }
\begin{document}
\def\R{\mathbb R}  \def\ff{\frac} \def\ss{\sqrt} \def\B{\mathbf
B} \def\BB{\scr B}
\def\N{\mathbb N} \def\kk{\kappa} \def\m{{\bf m}}
\def\dd{\delta} \def\DD{\Delta} \def\vv{\varepsilon} \def\rr{\rho}
\def\<{\langle} \def\>{\rangle} \def\GG{\Gamma} \def\gg{\gamma}
  \def\nn{\nabla} \def\pp{\partial} \def\tt{\tilde}
\def\d{\text{\rm{d}}} \def\bb{\beta} \def\aa{\alpha} \def\D{\scr D}
\def\EE{\mathbb E} \def\si{\sigma} \def\ess{\text{\rm{ess}}}
\def\beg{\begin} \def\beq{\begin{equation}}  \def\F{\scr F}
\def\Ric{\text{\rm{Ric}}} \def\Hess{\text{\rm{Hess}}}
\def\e{\text{\rm{e}}} \def\ua{\underline a} \def\OO{\Omega}  \def\oo{\omega}
 \def\tt{\tilde} \def\Ric{\text{\rm{Ric}}}
\def\cut{\text{\rm{cut}}} \def\P{\mathbb P} \def\ifn{I_n(f^{\bigotimes n})}
\def\C{\scr C}      \def\aaa{\mathbf{r}}     \def\r{r}
\def\gap{\text{\rm{gap}}} \def\prr{\pi_{{\bf m},\varrho}}  \def\r{\mathbf r}
\def\Z{\mathbb Z} \def\vrr{\varrho} \def\ll{\lambda}
\def\L{\scr L}\def\Tt{\tt} \def\TT{\tt}\def\II{\mathbb I}
\def\i{{\rm i}}\def\Sect{{\rm Sect}}\def\E{\mathbb E} \def\H{\mathbb H}
\def\M{\scr M}\def\Q{\mathbb Q} \def\texto{\text{o}} \def\i{{\rm i}}

\maketitle
\begin{abstract}  The Harnack inequality established in \cite{RW03}
for generalized Mehler semigroup is improved and generalized. As
applications, the log-Harnack inequality, the strong Feller
property, the hyper-bounded property, and some heat kernel
inequalities are presented for a class of O-U type semigroups with
jump. These inequalities and semigroup properties are indeed
equivalent, and thus sharp, for the Gaussian case. As an application
of the log-Harnack inequality, the HWI inequality is established for
the Gaussian case. Perturbations with linear growth are also
investigated.
\end{abstract} \noindent
 AMS subject Classification:\ 60J75, 47D07.   \\
\noindent
 Keywords:   Harnack inequality, Ornstein-Uhlenbeck process, L\'evy process,
 entropy-cost inequality.

 \vskip 2cm

\section{Introduction}
In this paper we aim to establish Harnack inequalities and
applications for a class of Ornstein-Uhlenbeck type SDEs driven by
L\'evy noises on Hilbert spaces. This problem has been investigated
in \cite{RW03} by using Mehler type formula for the associated
semigroups and gradient estimates for dimension-free
Harnack inequalities  developed in \cite{W97}. In this paper, we
shall adopt a measure transformation argument to derive a more
general and sharper Harnack inequality, and to present finer
estimates  of heat kernels. This method was initiated in
\cite{ATW06} by using coupling and Girsanov transformation to
establish Harnack inequalities for diffusion semigroups on manifolds
with unbounded below curvature, and has been applied in \cite{W07,
LW08, DRW09} for non-linear SPDEs driven by Gaussian
noises and also in \cite{OY} for diffusions with singular drifts and multivalued 
stochastic evolution equations. In this paper we shall
modify this argument to SPDEs with jumps.

Let us first recall the Harnack inequality derived in \cite{RW03}.
Let $\H$ be a real separable Hilbert space with inner product
$\<\cdot,\cdot\>$ and norm $\|\cdot\|$. Consider the following
L\'evy driven stochastic differential equation
\begin{equation}\label{1.1}
        \d X_t=AX_t\d t+\d Z_t,\quad X_0=x\in\H,
\end{equation}
where $A$ is the infinitesimal generator of a strongly continuous
  semigroup \((T_t)_{t\geq 0}\) on \(\H\), 
\(Z_t:=\{Z_t^u,\ u\in\H\} \) is a cylindrical L\'evy process with characteristic triplet \((a, R, M)\)
on some filtered probability space $(\Omega, \F, (\F_t)_{t\geq 0},
\P)$, that is, for every $u\in\H$ and $t\geq 0$
\begin{equation*}
\begin{split}
\E\exp(\i \< Z_t, u\>)=\exp \Bigl[
& \i t\<a, u\>-\frac{t}{2}\<Ru,u\>\\
        &{}-\int_{\H} \left[1-\exp(\i\<x,u\>)+\i\<x,u\>1_{\{|x|\leq 1\}}(x)\right] \,M(\d x)\Bigr],
\end{split}
\end{equation*}
where $a\in\H$, $R$ is a symmetric  linear operator on $\H$ such
that
$$R_t:= \int_0^t T_s RT_s^*\,\d s$$ is trace class for each $t>0,$ 
and $M$ is a L\'evy measure on $\H$.
(For simplicity, we shall write $Z_t^u=\<Z_t,u\>$ for every $u\in\H$.)
In this case, \eqref{1.1} has a unique mild solution
$$X_t= T_t x+\int_0^t T_{t-s}\d Z_s,\ \ t\ge 0.$$
Let
$$ P_tf(x)=\E f(X_t), \quad x\in\H,\ f\in \BB_{b}(\H),
$$
where $\BB_{b}(\H)$ is the space of all bounded measurable functions
on $\H$. Similarly, let  $\BB_b^+(\H)$,   $\C_b(\H)$, $\C^\infty(\H)$ be
 the classes of bounded positive,   bounded continuous, and
smooth functions on $\H$ respectively. Let $G$ be the orthogonal
complement of $\text{Ker} R^{1/2}$. Then the inverse $R^{-1/2}$ of
$R^{1/2}$ is well defined from $R^{1/2} \H$ to $G$. The following is
the main result derived in \cite{RW03}.

\beg{thm} \label{T0}{\bf (\cite{RW03})} Assume that there exists a
sequence of eigenvectors of $A^*$ separating the points of $\H$, $R$ is
of trace class, and $T_t R\H\subset R^{1/2} \H$ holds for all $t>0$.
If
\begin{equation}\label{*1} \|R^{-1/2}T_t Rx\|\le \ss{h(t)}\,\|R^{1/2}x\|,\ \
x\in \H, t\ge 0
\end{equation} 
holds for some positive function $h\in
C[0,\infty).$ Then for any $f\in \BB_b^+(\H),$
\beq\label{*2} (P_{t}f)^{2}(x)
    \leq \exp\bigg[\frac{\|R^{-1/2}(x-y)\|^2}{\int_0^t h(s)^{-1}\d s}
    \bigg]
    P_{t}f^2(y),\ \  t>0, x-y\in R^{1/2}\H.\end{equation}
If $M=0$, then for any $\aa>1$ and $f\in \BB_b^+(\H),$
\beq\label{*3} (P_{t}f)^{\alpha}(x)
    \leq  \exp\bigg[\frac{\aa\|R^{-1/2}(x-y)\|^2}{2(\aa-1)\int_0^t h(s)^{-1}\d s}
    \bigg]  P_{t}f^{\alpha}(y), \ \   t>0, x-y\in
    R^{1/2}\H.
\end{equation} 
\end{thm}

We note that due to the absence of a chain rule, for the case with jump
(i.e. $M\ne 0$), the Harnack inequality was
proved in \cite{RW03} only for $\alpha=2$ (i.e. (1.3)) 
by using gradient estimates.

To improve this result, we shall adopt a measure transformation
argument and the null controllability of the associated
deterministic equation (see Section 2). As a result, we obtain the
Harnack inequality by using the image norm $\|\GG_t x\|$ of the
operator

$$\GG_t:= R_t^{-1/2} T_t\ \text{with\ domain}\
 \D(\GG_t):= \big\{x\in \H:\ T_t x\in R_t^{1/2}\H\big\}.$$
 As explained above,
$R_t^{-1/2}$ is defined from $R_t^{1/2}\H$ to the orthogonal
complement of $\text{Ker}\, R_t^{1/2}.$ By letting $\|\GG_t
x\|=\infty$ for $x\notin \D(\GG_t)$ and $\inf\emptyset =\infty$, we
have

$$\|\GG_tx\|= \inf\big\{\|z\|:\ z\in \H, R_t^{1/2}z= T_t x\big\},\ \
x \in\H.$$

Our first result is an improvement of Theorem \ref{T0}: our Harnack
inequality generalizes (\ref{*2}) without the assumptions of Theorem
\ref{T0}. Moreover, our argument also implies the following
inequality (\ref{Gr}), which in particular implies the strong Feller
property 
(even $\|\Gamma_t\cdot\|$-Lipschitz strong Feller property)
of $P_t$ if $\GG_t$ is bounded.

\beg{thm} \label{T1} For any $\aa>1$ and $f\in\BB_b^+(\H)$,
\beq\label{H1} (P_t f(x))^\aa \le
\exp\Big[\ff{\aa\|\GG_t(x-y)\|^2}{2(\aa-1)}\Big] P_tf^\aa(y),\ \
x,y\in \H, t>0.\end{equation}Consequently, $(\ref{*1})$ implies
$(\ref{*3})$ for any $f\in \BB_b^+(\H).$
 Moreover, for any $f\in
\BB_b(\H)$ and $x,y\in \H$,
\beq\label{Gr}
\beg{split}&|P_t f(x)-P_tf(y)|^2\\
\le & \big(\e^{\|\GG_t(x-y)\|^2}-1\big) \min\big\{P_tf^2(x)
-(P_tf(x))^2, P_tf^2(y) -(P_tf(y))^2\big\}.
\end{split}
\end{equation}
\end{thm}

When $T_t \H\subset R_t^{1/2}\H$,  $\Gamma_{t}$ is a bounded
operator by the closed graph theorem. In this case Theorem
\ref{T1} implies the following result.

\beg{thm}\label{T2} Let $t>0$. The following statements are
gradually weaker, i.e. statement $(i)$ implies statement $(i+1)$ for
$1\le i\le 4$: \beg{enumerate} \item[$(1)$]\ $T_t\H\subset
R_t^{1/2}\H;$
\item[$(2)$]\ $\|\GG_t\|<\infty$ and for any $\aa>1$ and $f\in
\BB_b^+(\H),$ \beq\label{H2}
(P_tf(x))^\aa\le\exp\Big[\ff{\aa(\|\GG_t\|\cdot\|x-y\|)^2}{2(\aa-1)}\Big]
P_tf^\aa(y),\ \ x,y\in \H;\end{equation}\item[$(3)$]\
$\|\GG_t\|<\infty$ and there  exists $\aa>1$ such that $(\ref{H2})$
holds for all $f\in \BB_b^+(\H);$\item[$(4)$]\  $\|\GG_t\|<\infty$
and for any    $f\in \BB_b^+(\H)$ with $f\ge 1,$ \beq\label{LH}
P_t\log f(x)\le \log P_t f(y) +\ff{\|\GG_t\|^2}2 \|x-y\|^2,\ \
x,y\in \H;\end{equation}
\item[$(5)$]\ $P_t$ is strong Feller. \end{enumerate}
 If, in
particular, $M=0,$ then all the above statements are equivalent. 
\end{thm}

According to \cite[Theorem 3.1]{FR00}, $(P_t)$ has a unique invariant
probability measure $\mu$ provided
\ \newline {\bf (A)} \ \emph{$\lim\limits_{t\to \infty} \|T_t x\|=0$
for $x\in \H$; $\sup_{t>0}\operatorname{Tr} R_t<\infty$; $\int_0^\infty \d
s\int_\H (1\land \|T_s x\|^2)M(\d x)<\infty$;
$$ \lim_{t\to\infty} \bigg\{\int_0^t T_s a\,\d s +\int_0^t\d s \int_\H
T_s x \Big(\ff 1 {1+\|T_s x\|^2}-\ff 1 {1+\|x\|^2}\Big)M(\d
x)\bigg\}$$ exists in $\H$; and  $R$ is of trace class.}

In this case, if $P_t$ is strong Feller then it has a  density
$p_t(x,y)$ w.r.t. $\mu$ on $\text{supp}\,\mu$, the support of $\mu$.
As observed in the recent paper \cite{W09}, the Harnack inequality
$(\ref{H2})$ and the log-Harnack inequality $(\ref{LH})$ are
equivalent to the following inequalities for $p_t(x,y)$
respectively:
\beq\label{HH} \int_\H p_t(x,z)\Big( \frac{p_t(x,z)}{p_t(y,z)}
\Big)^{\frac{1}{\alpha-1}} \,\mu(\d z)
                \leq \exp\Big[\frac{\alpha (\|\Gamma_T\|\cdot\|x-y\|)^2}{2(\alpha-1)^2}\Big],\
                \ x,y\in\text{supp}\,\mu;\end{equation}
\beq\label{HLH} \int_\H p_t(x,y)\log
\frac{p_t(x,z)}{p_t(y,z)}\,\mu(\d z)\leq \frac{\|\GG_t\|^2}{2}
\|x-y\|^2,\quad x,y\in\text{supp}\,\mu.\end{equation} 
Moreover, if $M=0$, by
e.g. \cite[Theorem 10.3.5]{DZ02}, Theorem \ref{T2} (1) implies that $P_t
L^p(\H;\mu)\subset \scr C^\infty(\H)$ for $p>1$ and $t>0$. So, we have the
following consequence of Theorem \ref{T2}.

\beg{cor}\label{C3} Let $M=0$ and assume that $(P_t)$ has a 
invariant probability measure $\mu$ with full support.   Then for
any $t>0,$ $(1)$--$(5)$ in Theorem $\ref{T2}$ and the following
statements are equivalent:
\beg{enumerate}
\item[$(6)$] For any $\aa>1$, $(\ref{HH})$ holds; \item[$(7)$] For
some $\aa>1$, $(\ref{HH})$ holds;
\item[$(8)$] The entropy inequality $(\ref{HLH})$ holds;
\item[$(9)$] For any $p>1,$ $P_t
L^p(\H;\mu)\subset \scr C^\infty(\H).$\end{enumerate}\end{cor}

The following result is a standard consequence of the Harnack
inequality (\ref{H2}), where $(i)$  follows from \cite[Proposition
4.1]{DRW09}, $(ii)$ follows from Lemma \cite[Lemma 2.2]{RW03}, and
the proof of $(iii)$ is similar to the those of \cite[Theorem 1.5
and Proposition 1.6]{RW03} (see also \cite{OY} for details).

\begin{cor} Assume that $(P_t)$ has a invariant measure and that
$\GG_t$ is bounded for a fixed $t>0$, 
and let $p_t(x,y)$ be the density of $P_t$ w.r.t. $\mu$.
Then: \beg{enumerate} \item[$(i)$]\ $P_t L^p(\H;\mu)\subset \scr
\C(\H)$ for any $p>1$. \item[$(ii)$]  For any $\aa>1,$
    \[
    \|p_{t}(x,\cdot)\|_{L^{\alpha/(\alpha-1)}(\H;\mu)}\leq
    \left[
        \int_{\H}\exp\left(-\frac{\alpha\|\GG_t\|^2}{2(\alpha-1)}\|x-y\|^{2}\right)\,\mu(dy)
    \right]^{-1/\alpha},\ \ x\in {\rm supp}\mu.
    \]

\item[$(iii)$]  If there exist some constants $\varepsilon >0$ and
$\alpha>1$ such that $$  C(t,\alpha,\varepsilon):=
        \int_{\H} \left[ \int_{\H}
            \exp\left(-\frac{\alpha \|\GG_t\|^2}{2(\alpha-1)} \|x-y\|^{2} \right)
        \,\mu(dy) \right]^{-(1+\varepsilon)} \mu(dx)<\infty,
$$
then $P_t$ is hyper-bounded with
\[  \|P_{t}\|_{\alpha\rightarrow \alpha (1+\varepsilon)}
\leq  C(t,\alpha,\varepsilon)^{\frac{1}{\alpha(1+\varepsilon)}}.
\]
If $C(t,\alpha,0)<\infty$ then $P_t$ is uniformly integrable in
$L^\aa(\H;\mu)$ and hence $P_{s}$ is compact on $L^{\alpha}(\H,\mu)$
for every $s>t$. 
\end{enumerate}
\end{cor}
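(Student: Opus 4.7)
The plan is to derive all three parts from the pointwise Harnack inequality (\ref{H2}) of Theorem \ref{T2}, combined with the $\mu$-invariance of $P_t$ and a free-parameter integration in $y$. Since $\GG_t$ is bounded, Theorem \ref{T2} guarantees strong Feller, so the density $p_t(x,\cdot)$ with respect to $\mu$ exists on $\mathrm{supp}\,\mu$ and is the object to be estimated.

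The central step is the density bound (ii). Exponentiating (\ref{H2}) with $f\in\BB_b^+(\H)$ yields
\[
    (P_tf(x))^\alpha \exp\Big[-\ff{\alpha\|\GG_t\|^2}{2(\alpha-1)}\|x-y\|^2\Big] \le P_tf^\alpha(y).
\]
Integrating in $y$ against $\mu$ and invoking invariance collapses the right-hand side to $\int f^\alpha\,\d\mu$, producing the pointwise bound
\[
    (P_tf(x))^\alpha \le \|f\|_{L^\alpha(\H;\mu)}^\alpha \bigg[\int_\H \exp\Big(-\ff{\alpha\|\GG_t\|^2}{2(\alpha-1)}\|x-y\|^2\Big)\mu(\d y)\bigg]^{-1}.
\]
Reading this as the norm of the linear functional $g\mapsto\int p_t(x,z)g(z)\,\mu(\d z)$ on $L^\alpha(\H;\mu)$, duality delivers exactly the $L^{\alpha/(\alpha-1)}(\H;\mu)$ estimate of (ii).

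For (i), applying the same pointwise bound to $|f|^p$ shows $P_t|f|^p$ is locally bounded on $\mathrm{supp}\,\mu$ for any $f\in L^p(\H;\mu)$ with $p>1$. Inserting this into the oscillation estimate (\ref{Gr}), the boundedness of $\GG_t$ forces $\|\GG_t(x-y)\|\to 0$ as $y\to x$, and hence $P_tf(y)\to P_tf(x)$; this is the strategy of \cite[Proposition 4.1]{DRW09}.

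For (iii), hyper-boundedness is immediate: raising the pointwise bound from (ii) to the power $\alpha(1+\varepsilon)$, integrating against $\mu(\d x)$, and applying Fubini yields $\|P_tf\|_{L^{\alpha(1+\varepsilon)}(\H;\mu)}\le C(t,\alpha,\varepsilon)^{1/(\alpha(1+\varepsilon))}\|f\|_{L^\alpha(\H;\mu)}$. When $C(t,\alpha,0)<\infty$, the same pointwise bound supplies a $\mu$-integrable majorant of $(P_tf)^\alpha$ depending only on $\|f\|_{L^\alpha(\H;\mu)}$, so the image of the unit ball of $L^\alpha(\H;\mu)$ under $P_t$ is uniformly integrable in $L^1(\H;\mu)$; compactness of $P_s$ for $s>t$ is then read off from the factorization $P_s=P_{s-t}\circ P_t$ by composing with the continuous/strong Feller step $P_{s-t}$. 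The main obstacle is this last compactness promotion: in infinite dimensions, turning uniform integrability plus pointwise continuity into genuine norm precompactness in $L^\alpha(\H;\mu)$ is delicate and must be handled by a Dunford--Pettis/Vitali-type argument along the lines of \cite[Proposition 1.6]{RW03}.
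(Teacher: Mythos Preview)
Your approach matches the paper's: it does not give an independent proof of this corollary but simply refers the three items to \cite[Proposition~4.1]{DRW09}, \cite[Lemma~2.2]{RW03}, and \cite[Theorem~1.5, Proposition~1.6]{RW03} respectively, exactly as you do. Your derivation of the pointwise bound underlying~(ii) and the passage to hyper-boundedness and uniform integrability in~(iii) are the standard ones.

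There is, however, a slip in your treatment of~(i). The oscillation estimate~(\ref{Gr}) controls $|P_tf(x)-P_tf(y)|^2$ by $P_tf^2(y)-(P_tf(y))^2$, so to use it you need $P_tf^2$ to be locally finite. For $f\in L^p(\H;\mu)$ with $1<p\le 2$ one only has $f^2\in L^{p/2}(\H;\mu)$ with $p/2\le 1$, and the Harnack-based pointwise bound (which needs an exponent strictly larger than~$1$) does not apply to $f^2$; your phrase ``applying the same pointwise bound to $|f|^p$'' runs into the same obstruction, since $|f|^p\in L^1(\H;\mu)$ only. The argument in \cite[Proposition~4.1]{DRW09} circumvents this: one applies the pointwise bound to $|f|$ with exponent $\alpha=p$ to get a locally uniform $L^p\to L^\infty$ estimate, then approximates $f$ by bounded truncations $f_n$, uses strong Feller for each $P_tf_n$, and controls $P_t|f-f_n|$ uniformly on compacts by that same pointwise bound. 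Replacing the (\ref{Gr}) step by this approximation argument repairs~(i) for the full range $p>1$.
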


We shall prove Theorems \ref{T1} and \ref{T2} in the next section,
and  present in Section 3  applications of the log-Harnack
inequality to cost-entropy inequalities of the semigroup and the HWI
inequality in the Gaussian case. Finally, in Section 4 we
investigate the Harnack inequality and strong Feller property for a
class of semi-linear stochastic equations by using a perturbation
argument.

\section{Proofs of Theorems \ref{T1} and \ref{T2}}

As explained in the last section, Corollary \ref{C3} is a direct
consequence of Theorem \ref{T2}. Since (2) implying (3) is trivial,
  (3) implying (4) and (4) implying (5) have been proved in \cite{W09} for
  Markov semigroups on abstract Polish spaces,
  and (5) implying (1) follows from \cite[Theorem 9.19]{DZ92},
  it suffices to prove
  Theorem \ref{T1}.

  Consider the following linear control system on $\H$
 \begin{equation}\label{N}
\d x_{t}=Ax_{t}\,dt+R^{1/2}u_{t}\,dt, \ \
  x_{0}=x\in\H.
 \end{equation}

According %to the proof of
\cite[Part IV, Theorem 2.3]{Zab08} (ref. also the appendix of \cite{DZ92} or \cite{DZ02}),
\begin{equation}\label{GG}
    \|\Gamma_{t}x\|^{2}=\inf\bigg\{\int_{0}^{t} \|u_{s}\|^{2}\d s:\
            u \in L^{2}([0,t]\to\H; \d s),  x_0=x, x_{t}=0 \bigg\}.
\end{equation}
This implies the following upper bounds of $\|\GG_t x\|.$

\begin{prp}\label{P2.1} Let $t>0$.  Then for any strictly positive
$\xi\in C([0,t])$,
\begin{equation}\label{2.1}
    \| \Gamma_{t} x\|^2 \leq
        \frac{ \int_{0}^{t}  \|R^{-1/2}T_{s} x\|^{2}\,\xi_{s}^{2}    \d s }
        { \left( \int_{0}^{t}   \xi_{s} \d s \right) ^2},\quad x\in\H,
\end{equation}
 where $\|R^{-1/2}x\|=\infty$ if $x\notin R^{1/2}\H.$
Consequently, $(\ref{*1})$ implies
\begin{equation}\label{2.2}
    \| \Gamma_{t}x\|^2 \leq
            \frac{\|R^{-1/2}x\|^2}{ \int_{0}^{t}  h(s)^{-1}\,ds  },\ \ x\in  \H.
\end{equation}
\end{prp}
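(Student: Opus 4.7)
The plan is to use the control-theoretic formula (\ref{GG}), which represents $\|\GG_t x\|^2$ as the minimal $L^2([0,t]\to\H;\d s)$-energy among controls $u$ driving the deterministic system (\ref{N}) from $x_0 = x$ to $x_t = 0$. For each strictly positive $\xi \in C([0,t])$ I would exhibit a single explicit admissible control whose energy equals the right-hand side of (\ref{2.1}); the bound on $\|\GG_t x\|^2$ then drops out of (\ref{GG}).

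The natural candidate is
\[
    u_s := -\frac{\xi_s}{\int_0^t \xi_r\,\d r}\,R^{-1/2}T_s x, \qquad s \in [0,t],
\]
because then, using $T_{t-s}T_s = T_t$ and that the weights $\xi_s/\int_0^t \xi_r\,\d r$ integrate to one,
\[
    \int_0^t T_{t-s} R^{1/2} u_s\,\d s = -\frac{1}{\int_0^t \xi_r\,\d r}\int_0^t \xi_s\, T_t x\,\d s = -T_t x.
\]
Hence the mild solution $x_t = T_t x + \int_0^t T_{t-s} R^{1/2} u_s\,\d s$ of (\ref{N}) vanishes at time $t$. If the right-hand side of (\ref{2.1}) is $+\infty$, the inequality is trivial; otherwise $u \in L^2([0,t]\to\H;\d s)$ and $\int_0^t \|u_s\|^2\,\d s$ coincides with the RHS of (\ref{2.1}), so (\ref{GG}) yields (\ref{2.1}).

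For (\ref{2.2}), I would first recast (\ref{*1}), via the substitution $x \mapsto R^{-1/2}x$, into the equivalent inequality
\[
    \|R^{-1/2}T_s x\|^2 \le h(s)\,\|R^{-1/2}x\|^2, \qquad x \in R^{1/2}\H.
\]
Applying (\ref{2.1}) with $\xi_s = h(s)^{-1}$ bounds the numerator by $\|R^{-1/2}x\|^2 \int_0^t h(s)^{-1}\,\d s$, while the denominator is $(\int_0^t h(s)^{-1}\,\d s)^2$, so the quotient collapses to $\|R^{-1/2}x\|^2/\int_0^t h(s)^{-1}\,\d s$, which is (\ref{2.2}). For $x \notin R^{1/2}\H$ the right-hand side is $+\infty$ by convention and there is nothing to prove.

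The only mildly delicate point is the recast of (\ref{*1}) above: the hypothesis is stated via the variable $Rx$, which gives the desired inequality first only on the image $R\H$, and one needs an extension to $R^{1/2}\H$. This is handled by approximating $y \in R^{1/2}\H$ by $y_n \in R\H$ in the graph norm of $R^{-1/2}$ and invoking the closedness of the self-adjoint operator $R^{-1/2}$ on $G$. Once this is in place, both (\ref{2.1}) and (\ref{2.2}) reduce to the substitutions and elementary manipulations above.
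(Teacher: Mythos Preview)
Your proposal is correct and follows essentially the same approach as the paper: the same explicit control $u_s=-\frac{\xi_s}{\int_0^t\xi_r\,\d r}R^{-1/2}T_s x$ is plugged into (\ref{GG}) to get (\ref{2.1}), and then $\xi_s=h(s)^{-1}$ gives (\ref{2.2}). Your discussion of extending the recast of (\ref{*1}) from $R\H$ to $R^{1/2}\H$ via closedness of $R^{-1/2}$ is a point the paper passes over silently, so your version is in fact slightly more careful.
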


\begin{proof}
We only need to consider the case that $T_s x\in R^{1/2}\H$ for a.e.
$s\in [0,t]$ and $\{\xi_s R^{-1/2} T_s x\}_{s\in [0,t]}\in
L^2([0,t]\to\H;\d s).$ In this case, for
\begin{equation*}
u_s:=-\frac{\xi_{s}}{\int_{0}^{t} \xi_{r}\,\d r } R^{-1/2}T_{s} x
,\quad s\in[0,t],
\end{equation*}
one has a null control of the system \eqref{N}; that is, $u\in
L^2([0,t]\to\H;\d s)$ and

$$x_t:= T_t x +\int_0^t T_{t-s}R^{1/2} u_s\d s=0.$$ Then (\ref{2.1})
follows from (\ref{GG}) by taking $\xi(s)=h(s)^{-1}$, $s\in[0,t]$.
\end{proof}

To prove the desired Harnack inequality, we adopt the following
Girsanov theorem for L\'evy processes. Let  $\|\cdot\|_0$ be the
norm on $\H_0:=R^{1/2}(\H)$ with inner product
$\<x,y\>_{0}:=\<R^{-1/2}x, R^{-1/2}y\>$ for   $x,y\in \H_0$.

\begin{prp}
\label{P2.2} Let $t>0$. Suppose that $(Z_s)_{0\leq s\leq t}$ is an
$\H$-valued L\'evy process on a filtered probability space $(\Omega,
\F, (\F_{s})_{0\leq s\leq t},\P)$ with characteristic triplet $(a,R, M)$. Denote by
$Z'$  the Gaussian part of  $Z$. For any $\H_0$-valued   predictable
process $\psi_s$, independent of $Z_s-Z'_s$ such that
$$ s\mapsto
\rho_s:=\exp\bigg(\int_0^s \<\psi_r,\d Z'_r\>_{0}-
            \frac12\int_0^s \|\psi_r\|_{0}^2\,\d r  \bigg)$$
            is a $\F_s$-martingale, the process
$$[0,t]\ni s\mapsto  \tt{Z}_s:=Z_s-\int_0^s\psi_r\,\d r $$ is
also a L\'evy process  with   characteristic triplet $(a,R, M)$
under the  probability measure  $\d\tt \P:=\rho_t\d \P.$
\end{prp}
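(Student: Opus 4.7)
The strategy is to split $Z$ by its L\'evy--It\^o decomposition $Z=Z'+Z^J$ with $Z^J:=Z-Z'$, so that $Z'$ is a cylindrical $R$-Wiener process (characteristic triplet $(0,R,0)$) and $Z^J$ is an independent pure-jump L\'evy process (triplet $(a,0,M)$). Since $\rho_s$ is a stochastic exponential built only from $Z'$ and $\psi$, and $\psi$ is independent of $Z^J$ by hypothesis, the change of measure $\d\tt\P=\rho_t\,\d\P$ should act only on the Gaussian part and leave $Z^J$ invariant in law. The proposition will then follow from the classical Girsanov theorem for Hilbert-valued Wiener processes applied to $Z'$, combined with an independence argument for $Z^J$.

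First I would verify that $\int_0^s\<\psi_r,\d Z'_r\>_0$ is well defined: writing $\psi_r=R^{1/2}\phi_r$ with $\phi_r\in(\ker R^{1/2})^\perp$, it coincides with the usual cylindrical stochastic integral of $\phi$ against $Z'$ viewed as a cylindrical $R$-Wiener process, and the $\F_s$-martingale property of $\rho$ is assumed. Then the infinite-dimensional Girsanov theorem for Hilbert-valued Wiener processes, applied to $Z'$, yields that $\tt Z'_s:=Z'_s-\int_0^s\psi_r\,\d r$ is again a cylindrical $R$-Wiener process under $\tt\P$, i.e.\ has triplet $(0,R,0)$.

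The substantive step is to show that $Z^J$ retains its $\P$-law under $\tt\P$ and is $\tt\P$-independent of $(Z',\psi)$, hence of $\tt Z'$. Since $\rho_t$ is a measurable function of $(Z'_r,\psi_r)_{r\le t}$, and this pair is $\P$-independent of $Z^J$, for any bounded measurable functionals $F$ on the path space of $Z^J$ and $G$ on that of $(Z',\psi)$ one obtains
\[
\tt\E[F(Z^J)G(Z',\psi)]=\E[\rho_t G(Z',\psi)]\,\E[F(Z^J)]=\tt\E[G(Z',\psi)]\,\E[F(Z^J)].
\]
Setting $G\equiv 1$ shows that $Z^J$ has the same $\tt\P$-law as under $\P$ (triplet $(a,0,M)$), and the full identity gives the required $\tt\P$-independence. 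The characteristic functional of $\tt Z_t=\tt Z'_t+Z^J_t$ under $\tt\P$ therefore factors into a Gaussian and a jump exponent with the correct parameters, reproducing the L\'evy--Khintchine form for the triplet $(a,R,M)$. The stationary and independent increments of $\tt Z$ follow by repeating the Bayes/Girsanov computation for $\tt Z_t-\tt Z_s$ conditional on $\F_s$, using $\rho_t/\rho_s$ in place of $\rho_t$.

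The main obstacle is precisely this independence bookkeeping: a generic predictable process adapted to the filtration of $Z$ may depend on the jump part, in which case $\rho_t$ would fail to be independent of $Z^J$ under $\P$ and the jump component could pick up a non-trivial tilt through the change of measure. The hypothesis that $\psi$ is independent of $Z-Z'$ is exactly what rules this out; once granted, the calculation above is essentially routine.
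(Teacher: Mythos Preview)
Your proposal is correct and follows essentially the same route as the paper: split $Z=Z'+Z''$ via the L\'evy--It\^o decomposition, apply the Hilbert-space Girsanov theorem to the Gaussian part to see that $\tt Z'_s=Z'_s-\int_0^s\psi_r\,\d r$ is an $R$-Wiener process under $\tt\P$, and then use the $\P$-independence of $(\psi,Z')$ from $Z''$ to factor the characteristic functional of $\tt Z_s$ under $\tt\P$ into its Gaussian and jump exponents. The paper's proof is terser---it jumps straight to the one-line factorization $\E_{\tt\P}\e^{\i\<z,\tt Z_s\>}=\big(\E\rho_s\e^{\i\<z,\tt Z'_s\>}\big)\E\e^{\i\<z,Z''_s\>}$ and does not spell out the stationary-independent-increments check---whereas you make the independence bookkeeping and the increment argument explicit; but the underlying mechanism is identical.
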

\begin{proof}
We write
\[\E\exp(\i\<Z_s, z\>)=\exp\left[-s\vartheta_{1}(z)-z\vartheta_{2}(z)\right], \quad z\in\H, \]
where
\[\vartheta_{1}(z):=\frac{1}{2}\<Rz,z\>\]
and
\[\vartheta_{2}(z):=-\i\<z,a\>+\int_{\H}
               \left[1-\exp(\i\<z,x\>)+\i\<z,x\>1_{\{|x|\leq 1\}}(x)\right]\,M(\d x).
\]
Correspondingly, the process $Z_s$ is
 decomposed by $Z_s=Z'_s+Z''_s,$
where $Z'_s$ is the Gaussian part of $Z_s$ with symbol
$\vartheta_{1}$, and  $Z''_s$ is the jump process with
symbol $\vartheta_{2}$.

 By the Girsanov theorem for Wiener processes on Hilbert space (see
\cite[Theorem 10.14]{DZ92}),
\[\widetilde{Z}'_s=Z'_s-\int_0^s\psi_r\,\d r, \quad 0\leq s\leq t\]
is   an $R$-Wiener process under the probability measure $\tt{\P}$.
Consequently, for all $0\leq s\leq t$ and all $z\in\H$, by the
martingale property of $\rr_s$ we have
$$
  \E_{\tt\P} \big[ \exp(\i \<z,   \tt{Z}'_s\> \big]
=\E \big[\rr_s \exp(\i \<z,   \tt{Z}'_s\>)\big]=\E\exp(\i \<z,
Z'_s\>)
 =\exp\left[-s\vartheta_{1}(z)\right],$$ where $\E_{\tt\P}$ is the
 expectation taken for $\tt\P.$
Combining this with the independence of $Z'$ and $Z''$, we obtain

   $$ \E_{\tt{\P}}\exp\left(\i\<z,\widetilde{Z}_s\>\right)
=\Big(\E\rho_s\exp\left[\i\left\<z,
\widetilde{Z}'_s\right\>\right]\Big)
     \E\exp\left(\i\<z, Z''_s\>\right)
=\exp\left[-s\vartheta_{1}(z)-t\vartheta_{2}(z)\right].$$ Thus,
under $\tt\P$ the characteristic symbol of $\widetilde{Z}_s$   is
also $\vartheta_{1}+\vartheta_{2}$. This completes the proof.
\end{proof}

By Proposition \ref{P2.2}, we are able to establish the  Harnack
inequality by using the null controllability of the deterministic
equation (\ref{N}).

\beg{prp}\label{P2.3}   Let $t>0$ and $x,y\in\H$. Suppose that there
exists  $u \in L^{2}([0,t]\to \H; \d s)$ such that $x_t=0$, where
$x_s$ solves $(\ref{N})$ with $x_0=y-x$.  Then for any $\aa>1,$
\begin{equation}\label{H'}
    (P_{t}f)^{\alpha}(x)\leq \exp\left(\frac{\alpha}{2(\alpha-1)} \int_{0}^{t} \|u_s\|^{2}\,\d
    s
        \right)  P_{t}f^{\alpha}(y),\ \ f\in \BB_b^+(\H).
\end{equation} Moreover, for any $f\in \BB_b(\H)$ and $x,y\in\H,$
\begin{equation}\label{Gr'}   |P_t f(x)-P_tf(y)|^2\\
\le \big(\e^{\int_{0}^{t} \|u_s\|^{2}\d s}-1\big) \big\{P_tf^2(y)
-(P_tf(y))^2 \big\}.
\end{equation}

\end{prp}

\begin{proof}
Let $(Z'_{s})_{0\leq s\leq t}$ be the Gaussian part of the L\'evy
process $Z_{s}$, which is an $R$-Wiener process on $\H$. Let
$\psi_{s}=R^{1/2}u_{s} \in \H_{0}$ for $s\in [0,t]$.   Then by
Proposition \ref{P2.2},

$$
\widetilde{Z}_{s}:=Z_{s}-\int_{0}^{s} \psi_r\,\d r,\quad 0\leq s\leq
t$$ is  a L\'evy process with characteristic triplet $(a,R,M)$ under
the probability measure   $\tt{\P}$.

Let
\begin{equation*} \beg{split}
&Y_{t}^{y}=S_{t}y+\int_{0}^{t}S_{t-s}\,\d Z_{s},\\
&X_{t}^{x}=S_{t}x+\int_{0}^{t}
S_{t-s}\,\d\widetilde{Z}_{s}.\end{split}
\end{equation*} 
Then, by the
definition of $P_t$ and since $Z_s$ and $\widetilde{Z_s}$ are cylindrical L\'evy
processes with characteristic triplet $(a,R,M)$ under $\P$ and
$\tt\P$ respectively, we have
\begin{equation}\label{PP}P_t f(y)= \E f(Y_t^y),\ \ P_tf(x)= \E_{\tt\P}
f(X_t^x)=\E\big[\rr_t f(X_t^x)\big],\ \ f\in
\BB_b(\H).
\end{equation} 
Moreover, it is easy to see that
 $$X^{x}_{s}=Y_{s}^{y}-x_{s},\ \ \ s\in [0,t].$$
So, $X_t^x=Y_t^y$ due to  $x_{t}=0$. Combining this with (\ref{PP}),
for any $f\in \BB_b^+(\H)$ we have
\begin{equation*}
\begin{split}
\P_{t}f(x)  &=\E  \big[\rho_{t} f(X_{t}^{x})\big]=\E  \big[\rho_{t}
f(Y_{t}^{y})\big]\\
&\leq \bigl(\E
\rho_{t}^{\alpha/(\alpha-1)}\bigr)^{(\alpha-1)/\alpha} \bigl(\E
f^{\alpha}(Y_{t}^{y})\bigr)^{1/\alpha}      = \bigl(\E
\rho_{t}^{\alpha/(\alpha-1)}\bigr)^{\alpha-1)/\alpha}
\bigl(P_{t}f^\alpha(y)\bigr)^{1/\alpha}.
\end{split}
\end{equation*}

This implies (\ref{H'}) by noting that
$$
\E\rho_{t}^{\alpha/(\alpha-1)}= \exp\Big[
                    \frac{\alpha}{2(\alpha-1)^2}\int_{0}^{t} \|\psi_s\|_{0}^{2}\,\d
                    s\Big].$$
Similarly, since $\E\rr_t=1$, we have \begin{equation*}\beg{split}
    |P_{t}f(x)-P_{t}f(y)|^2
&=|\E f(\rho_{t}X_{t}^{x})-\E f(Y_{t}^{y})|^2\\
&=\big|\E \big\{(\rho_{t}-1)
\big(f(Y_{t}^{y})-P_tf(y)\big)\big\}\big|^2\leq \big\{P_t
f^2(y)-(P_tf(y))^2\big\}  \E |\rho_{t}-1|^2.
\end{split}
\end{equation*}
This implies (\ref{Gr'}) by noting that
 $$\E(\rho_{t}-1)^{2} =\E \rho_{t}^{2}-1=\e^{\int_0^t\|u_s\|^2\d
 s}-1.$$  \end{proof}

\ \newline \emph{Proof of Theorem \ref{T1}.} Combining (\ref{GG})
with (\ref{H'}), we obtain (\ref{H1}). If (\ref{*1}) holds, then
(\ref{*3}) follows from (\ref{H'}) according to Proposition
\ref{P2.1}. Finally, (\ref{Gr}) follows from (\ref{GG}) and
(\ref{Gr'}), where the latter holds   also by exchanging the
positions of $x$ and $y$.\qed

\section{Application to the HWI inequality}

The HWI inequality, established in \cite{OV} and reproved in
\cite{BGL} for symmetric diffusions on finite dimensional Riemannian
manifolds, links the entropy, information and the
transportation-cost. In this section, we shall prove it for the
present non-symmetric infinite-dimensional model.

Throughout this section we assume that 
\\
{\bf (A$'$)}\ \
$P_t$ has an invariant
probability measure $\mu$.

This assumption follows from assumption {\bf
(A)} as explained in Section 1. We first observe that the
log-Harnack inequality (\ref{LH}) implies an entropy-cost inequality
for $P_t^*$, the joint operator of $P_t$ on  $L^2(\H;\mu).$

\beg{prp} \label{P3.1} 
Assume $(A')$. 
Let $P_t^*$ be the adjoint operator of $P_t$
on $L^2(\H;\mu)$. If $\|\GG_t\|<\infty$, then
$$\mu((P_t^*f)\log P_t^*f)\le \ff {\|\GG_t\|^2} 2 W_{2}(f\mu,\mu)^2,\ \ \
f\ge 0, \mu(f)=1$$ holds, where $W_{2}^2$ is the Warsserstein
distance induced by the cost-function $(x,y)\mapsto \|x-y\|^2$; that
is,
$$W_{2}(f\mu,\mu)^2 =\inf_{\pi\in \scr
C(f\mu,\mu)}\int_{\H\times\H} \|x-y\|^2\pi(\d x,\d y)$$ for $\scr
C(f\mu,\mu)$ the set of all couplings of $f\mu$ and $\mu$.
Consequently,  $(\ref{*1})$ implies

$$\mu((P_t^*f)\log P_t^*f)\le
\ff{W_2(f\mu,\mu)^2}{2\int_0^th(s)^{-1}\d s},\ \ t>0.$$  \end{prp}

\beg{proof} Due to Proposition \ref{P2.1}, it suffices to prove the
first assertion. We shall adopt an argument in \cite{BGL} by using
the log-Harnack inequality (\ref{LH}). Let $f\ge 0$ such that
$\mu(f)=1$. By an approximation argument, we may assume that $f$ is
bounded. So, by Theorem \ref{T2} we have
$$P_t (\log P_t^*f)(x)\le \log (P_tP_t^*f)(y) +\ff {\|\GG_t\|^2} 2
\|x-y\|^2,\ \ x,y\in \H.$$ 
Integrating both sides w.r.t.
$\pi\in\scr C(f\mu, \mu)$, and minimizing in $\pi$, we arrive at
$$\mu((P_t^* f)\log P_t^* f) \le \mu(\log (P_tP_t^*f)) + \ff
{\|\GG_t\|^2} 2 W_2(f\mu,\mu)^2.$$ This completes the proof by
noting that, since $\mu$ is invariant for $P_t$ and $P_t^*$,
$$ \mu(\log(P_tP_t^*f))\le \log \mu(P_t P_t^*f)=\log
1=0.$$
\end{proof}

According to the above result, to derive the entropy-cost inequality 
for $P_t$,we shall need the log-Harnack inequality for the adjoint semigroup
$P_t^*$. To ensure that $P_t^*$ is again an O-U type semigroup, we
shall simply consider the Gaussian case (i.e. $M=0$), and assume {\bf (A)}. In
this case, $\mu$ is a Gaussian measure with co-variance
$$R_\infty:= \int_0^\infty T_s RT_s^*\d s.$$
To see that $P_t^*$ as a generalized Mehler semigroup (in the sense of \cite{FR00}), we assume
further

\ \newline {\bf (B)} \ \emph{$M=0, R_\infty \H\subset \D(A^*)$, and
the operator $\tt A = R_\infty A^* R_\infty^{-1}$ with domain
$$\D(\tt A):= \big\{x\in R_\infty \H:\ R_\infty^{-1} x \in
\D(A^*)\big\}$$ generates a   $C_0$-semigroup $\tt T_t$ on $\H$ such
that
$$\tt R_t=\int_0^t \tt T_s R\tt T_s^*\d s $$ is of trace class for $t>0.$}

\ \newline In this case, $P_t^*$ can be calculated explicitly as  (see
\cite[Proposition 10.1.9]{DZ02})

$$P_t^* f(x)= \int_\H f(\tt T_t x + y) N_{\tt R_{t}}(dy), \quad f\in
\mathscr{B}_b(\H),$$ where $N_{\tt R_t}$ is the centered Gaussian
measure with co-variance $\tt R_t.$ Thus, $P_t^*$ is the transition semigroup
of the solution to

$$\d \tt X_t= \tt A \tt X_t\d t + R^{1/2}\d W_t$$ for $W_t$ the
cylindrical Brownian motion on $\H$.   So, the following is a direct
consequence of Theorems \ref{T1} and \ref{T2} and Proposition
\ref{P3.1}.

\beg{prp}\label{P3.2} Assume {\bf (A), (B)}. Let $\tt \GG_t= \tt
R^{-1/2}_t \tt T_t.$ Then
$$(P_t^* f )^\alpha (x)\leq P_t^*f^\alpha (y) \exp\left(
\frac{\alpha \|\tt \GG_t(x-y) \|^2}{2(\alpha-1)}    \right),\ \
f\in\BB_b^+(\H), x,y\in\H.$$
If\/ $\tt\GG_t$ is bounded, then
$(\ref{LH})$ holds for $P_t^*, \tt\GG_t$ in place of $P_t$ and
$\GG_t$ respectively. In particular,
\begin{equation}\label{EC} \mu((P_t f)\log P_t f)\le \ff {\|\tt\GG_t\|^2} 2
W_2(\mu, f\mu)^2,\ \ f\ge 0, \mu(f)=1.\end{equation} \end{prp}

Let $W_0$ be the space of functions $f$ of the form
\[  f(x)=F(\<\xi_1,x\>,\cdots,\<\xi_m,x\>),\quad x\in \H
\]
for some $m\geq 1$ and $F\in \scr S (\R^m ,\mathbb{C})$ (i.e. the
Schwartz space of complex-valued functions, $``$rapidly decreasing"
at infinity as well as their derivatives). Let $W$ be the
real-valued elements of $W_0$. According to \cite{LR04}, $W$ is
dense in $L^p(\mu)$ for any $p\geq 1$ and is a core of $D(L)$, the
$L^2(\mu)$-domain of the generator $L$ of $P_t$. We are now able to
present the following result on the HWI inequality.

\beg{thm} \label{T3.3} Assume {\bf (A)} and {\bf (B)}. Assume
further that $A^*$ has a sequence of eigenvectors separating the
points in $\H$. If $(\ref{*1})$ holds then
$$\mu(f^2\log f^2)\le 2\mu(\<\R Df, Df\>)\int_0^t h(s)\d s+ \ff {\|\tt
\GG_t\|^2} 2 W_2(\mu, f^2\mu)^2,\ \ t>0, f\in W, \mu(f^2)=1,$$ where
$Df$ is the Fr\'echet derivative of $f$.
 \end{thm}
\beg{proof} Let $f\in W$ such that $\mu(f^2)=1.$ By \cite[Theorem
1.3(2)]{RW03}, we have
$$P_t(f^2\log f^2)\le (P_t f^2)\log P_t f^2 + 2\bigg(\int_0^t h(s)\d
s\bigg) P_t \<RDf, Df\>.$$ 
Integrating w.r.t. $\mu$ we obtain
$$\mu(f^2\log f^2) \le 2\mu(\<\R Df, Df\>)\int_0^t h(s)\d s+
\mu((P_t f^2)\log P_t f^2).$$ The proof is then completed by
combining this with Proposition \ref{P3.2}. \end{proof}

If in particular $P_t$ is symmetric (it is the case iff
$AR^{1/2}=R^{1/2}A^*$), then (\ref{*1}) implies

$$\mu(f^2\log f^2)\le 2\mu(\<\R Df, Df\>)\int_0^t h(s)\d s+ \ff 1  {2\int_0^t h(s)^{-1}\d s}
W_2(\mu, f^2\mu)^2$$ for all $ f\in W, \mu(f^2)=1, t>0.$

%To conclude the section, we present below a simple example for the
%non-symmetric semigroup.

\section{Semi-linear stochastic equations}
  Consider the equation
\begin{equation}\label{4.1} \d X_t^x = AX_t^x\d t + F(X_t^x)\d t + R^{1/2} \d
W_t,\ \ X_0^x=x\in\H,\end{equation} where $F$ is a measurable map on
$\H$ such that $F(\H)\subset R^{1/2} \H,$ and $W_t$ is the
cylindrical Brownian motion on $\H$. We shall establish the Harnack
inequality for the associated semigroup by regarding (\ref{4.1}) as
a perturbation to (\ref{1.1}) with $Z_t= R^{1/2} W_t$, i.e. $b=0,
M=0.$ Since we do not assume that $F$ is dissipative, the study is
not included in \cite{DRW09}. In general, this equation only admits
a weak solution. In this paper we shall consider the weak solution
for \eqref{4.1} constructed from (\ref{1.1}) with $Z_t= R^{1/2} W_t$ by Girsanov
transformations.

Let $\tt X_t^x$ be the mild solution to
$$\d \tt X_t^x = A \tt X_t^x\d t + R^{1/2} \d W_t,\ \ X_0^x=x.$$ We
have $\tt X_t^x= W_A(t) +T_t x,$ where
$$W_A(t) = \int_0^{t} T_{t-s} R^{1/2}\d W_s.$$ Since $\int_0^t T_s RT_s^*\d s$ is of trace class,
$W_A\in L^2([0,t];\H)$ for any $t>0.$ Let
\beg{equation*}\beg{split} &\psi_x(t)=R^{-1/2}F(W_A(t)+T_tx), \\
    &\tt W_t^x=W_t-\int_0^t \psi_x(s)\d s, \\
    &\rho_t^x=\exp\left(\int_0^t\<\psi_x(s),\d W_s\>-\frac{1}{2} \int_0^t \|\psi_x(s)\|^2\d
    s\right).\end{split}
\end{equation*}

Assume that
\begin{equation}\label{4.2} \|R^{-1/2} F(x)\|^2\le k_1 +k_2 \|x\|^2,\ \
x\in\H\end{equation} holds for some $k_1,k_2\ge 0.$ Then  by
\cite[Theorem 10.20]{DZ92} and its proof, $\Q_x:= \rr_t^x\P$ is a
probability measure and $\tt X_t^x$ is a weak  solution to
(\ref{4.1}) under $\Q_x$    with respect to the cylindrical Brownian
motion  $\tt W_t^x$. Denote the corresponding $``$semigroup" by
\begin{equation}\label{PPP}P_t^Ff(x) = \E_{\Q_x} f(\tt X_t^x),\ \ \ f\in
\BB_b(\H).\end{equation} We note that due to the lack of uniqueness,
in general $P_t^F$ may not provide a semigroup (but cf. also \cite{LR04}).
 Let $P_t$ be the semigroup of $\tt X_t^\cdot$ under $\P$.
By Theorem \ref{T1} we have
\begin{equation}\label{4.3}
    (P_tf)^\alpha(x)\leq P_tf^\alpha (y)
                        \exp\left( \frac{\alpha\| \Gamma_t(x-y) \|^2}{2(\alpha-1)} \right)
    , \quad f\in \BB_b^+(\H),
\end{equation}
where $\Gamma_t:=R_t^{-1/2}T_t$. Moreover, by \cite[(10.42)]{DZ92},
for any $p>0$ there exists $t_p>0$ such that
$$C_{p,k_2}(t):= \E \exp\bigg(2p(2p+1)k_2\int_0^t \|W_A(s)\|^2\d
s\bigg)<\infty,\ \ t\in [0,t_p].$$ In particular, if $k_2=0$ then
$C_{p,k_2}(t)=1,\ t\ge 0.$ More precisely, let
$$\theta=\text Tr \int_0^1 T_sRT_s^*\d s.$$ We have
$$C_0:=\sup_{s\in [0,1]}\E \e^{\|W_A(s)\|^2/4\theta}<\infty.$$ Thus,
for any $\ll>0$, \begin{equation}\label{GGG} \beg{split} \E \e^{\ll \int_0^t
\|W_A(s)\|^2\d s} &=\E\e^{\ff 1 t \int_0^t \ll t \|W_A(s)\|\d s} \le
\ff 1 t \int_0^t \E \e^{\ll
t\|W_A(s)\|^2}\d s\\
&\le \ff 1 t \int_0^t \big(\E\e^{\|W_A(s)\|^2/4\theta}\big)^{4\theta
\ll t}\d s \le C_0^{4\theta\ll t},\ \ \
t\in [0,1\wedge (4\theta\ll)^{-1}].\end{split}
\end{equation}

Combining this with (\ref{PPP}) we obtain the following result.
\begin{thm} If $(\ref{4.2})$ holds, then for any $t>0$,  $\alpha>1$, $x,y\in \H$, $p, q>1$ with $\alpha/(pq)>1$,
and  $f\in \BB_b^+(\H)$
\begin{equation*}
\begin{aligned}
    (P_t^F f)^\alpha(x)\leq & \left(C_{\frac{p}{p-1},k_2}(t)\right)^{\aa p/(2(p-1))}
                         \left(C_{\frac{1}{q-1}, k_2}(t)\right)^{\aa q/(2(q-1))}    P_t^F f^\alpha (y)
                        \exp\left( \frac{\alpha q \| \Gamma_t(x-y)\|^2}{2(\alpha-q)} \right.
                          \\
                         &\left.\quad + \aa \left[\frac{p+1}{p-1}+\frac{q+1}{q(q-1)}\right]\int_0^t
                         \left[k_1+k_2(\|T_s x\|^2+\|T_s
                         y\|^2)\right]\d s\right).
\end{aligned}
\end{equation*}  Consequently, if $\|\GG_t\|<\infty$ for $t>0$, then
$P_t^F$  is strong Feller provided it is a semigroup.
\end{thm}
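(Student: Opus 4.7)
The plan is to bring the inequality down to the Harnack bound (\ref{4.3}) for the drift-free O--U semigroup $P_t$ by stripping off the Girsanov density $\rho_t^x$ via H\"older's inequality at $x$, then restoring a similar density on the $y$-side. First I would write $P_t^F f(x)=\E[\rho_t^x f(\tt X_t^x)]$ and apply H\"older with conjugate exponents $(p,p/(p-1))$ to separate the density from the function value:
\[
P_t^F f(x)\le\bigl(\E(\rho_t^x)^p\bigr)^{1/p}\bigl(P_t f^{p/(p-1)}(x)\bigr)^{(p-1)/p}.
\]

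Raising this to the $\alpha$-th power, I would then invoke (\ref{4.3}) for $P_t$ applied to $f^{p/(p-1)}$, using the substitution $\alpha\to\alpha/q$ followed by raising to the $q$-th power (this is what converts the Harnack cost $\alpha/(2(\alpha-q))$ into the target $\alpha q/(2(\alpha-q))$). A third H\"older with conjugate exponents $(q,q/(q-1))$ on the $y$-side then turns the remaining $P_t$-integral at $y$ into $P_t^F f^\alpha(y)=\E[\rho_t^y f^\alpha(\tt X_t^y)]$, at the cost of the negative moment $\bigl(\E(\rho_t^y)^{-1/(q-1)}\bigr)^{(q-1)/q}$; the hypothesis $\alpha/(pq)>1$ ensures all intermediate exponents are admissible. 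The moments $\E(\rho_t^x)^p$ and $\E(\rho_t^y)^{-1/(q-1)}$ would be estimated via the identity $(\rho_t^x)^r=\rho_t^{x,r}\exp\bigl(\tfrac{r(r-1)}{2}\int_0^t\|\psi_x(s)\|^2\,\d s\bigr)$, with $\rho_t^{x,r}$ an exponential martingale of mean one, followed by Cauchy--Schwarz separating the martingale from the exponential of the quadratic variation. The bound $\|\psi_x(s)\|^2\le k_1+2k_2(\|T_s x\|^2+\|W_A(s)\|^2)$ from (\ref{4.2}) then splits this into the deterministic contribution producing the factor $\exp\bigl(\alpha[(p+1)/(p-1)+(q+1)/(q(q-1))]\int_0^t[k_1+k_2(\|T_s x\|^2+\|T_s y\|^2)]\,\d s\bigr)$ and a random factor $\E\exp\bigl(c\,k_2\int_0^t\|W_A(s)\|^2\,\d s\bigr)$ controlled by $C_{r,k_2}(t)$ through (\ref{GGG}), at the critical values $r=p/(p-1)$ and $r=1/(q-1)$.

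The principal obstacle is the exponent bookkeeping: the three H\"older applications, the Harnack step, and the two Cauchy--Schwarz splits must be calibrated so every constant in the stated bound, especially the multipliers $2r(2r+1)k_2$ required to produce $C_{r,k_2}(t)$, falls out exactly as claimed. For the strong Feller consequence, assume $P_t^F$ is a semigroup and $\|\Gamma_t\|<\infty$: for $0\le f\le 1$ one has $P_t^F f^\alpha(y)\le 1$, so as $y\to x$ the right-hand side of the Harnack inequality stays bounded while $\|\Gamma_t(x-y)\|\to 0$, and combining this with the inequality obtained by exchanging $x$ and $y$ (arguing as in the $(3)\Rightarrow(5)$ step of the proof of Theorem~\ref{T2}) yields continuity of $y\mapsto P_t^F f(y)$ for every bounded measurable $f$.
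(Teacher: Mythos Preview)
Your plan for the Harnack inequality is essentially the paper's own argument: H\"older at $x$ to pass from $P_t^F$ to $P_t$, the Harnack estimate (\ref{4.3}) for $P_t$, then a reverse H\"older at $y$ to return to $P_t^F$, with the density moments controlled exactly as in Lemma~\ref{L4.2}. The only cosmetic difference is that you place $\rho_t^x$ in the $L^p$ slot and $f$ in the $L^{p/(p-1)}$ slot, whereas the paper does the opposite; this merely relabels $p\leftrightarrow p/(p-1)$ in the final constants.

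There is, however, a genuine gap in your strong Feller argument. Letting $y\to x$ in the Harnack inequality at fixed $t$ leaves you with
\[
(P_t^F f(y))^\alpha \le C(t,\alpha,p,q,x)\,P_t^F f^\alpha(x),
\]
and you then need $C\to 1$ and $\alpha\to 1$ to conclude. But the constraint $\alpha>pq$ forces $p,q\downarrow 1$ as $\alpha\downarrow 1$, hence $p/(p-1)\to\infty$ and $1/(q-1)\to\infty$; the factors $C_{p/(p-1),k_2}(t)$ and $C_{1/(q-1),k_2}(t)$ then blow up (they are only finite for $t$ below a threshold depending on those exponents). So the argument you sketch, which works for $P_t$ in Theorem~\ref{T2} precisely because no such extra constants appear, breaks down here when $k_2>0$.

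The paper repairs this by exploiting the assumed semigroup property: write $P_t^F f = P_s^F(P_{t-s}^F f)$ and apply the Harnack inequality at the small time $s$. One first lets $x\to y$, then $s\to 0$ (which drives the extra constants to $1$ for each fixed $\alpha,p,q$), and only afterwards lets $\alpha\to 1$. This triple limit, together with Jensen's inequality $P_s^F(P_{t-s}^F f)^\alpha\le P_t^F f^\alpha$, yields $\limsup_{x\to y}P_t^F f(x)\le P_t^F f(y)$, and an analogous lower bound gives continuity.
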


 \beg{proof}
    For simplicity, we denote
        $p'=\frac{p}{p-1}$,         $q'=\frac{q}{q-1}$,     $\theta=\alpha/(pq)$.
\[
\begin{aligned}
    P_t^F f (x) &= \E_{\Q_x} f(\tt X_t^x)=\E \rho_t^x f(\tt X_t^x)
                \leq  (\E f^p (\tt X_t^x))^{1/p} (\E (\rho_t^x)^{p'} )^{1/p'}\\
            & =  (P_t  f^p (x))^{1/p} (\E(\rho_t^x)^{p'} )^{1/p'}\\
             &   \leq \left[    P_t f^{\theta p}(y)
                    \exp\left(  \frac{\theta\|\Gamma_t(x-y)\|^2}{2(\theta-1)}   \right)
                        \right]^{1/(\theta p)} (\E(\rho_t^x)^{p'} )^{1/p'}.
\end{aligned}
\]
On the other hand, for any $g\in \C_b^+(\H)$,
\[
    P_tg(y)\leq \E_\P g(\tt X_t^y) =\E_{\Q_y}g(\tt X_t^y) (\rho_t^y)^{-1}
    \leq (P_t^F g^q (y))^{1/q} (\E(\rho_t^y )^{1-q'} )^{1/q'}.
\]
So by taking $g=f^{\theta p}$
\[
    ( P_t^F f )^\alpha (x)
        \leq P_t^F f^{\alpha}(y) \exp\left( \frac{\alpha \|\Gamma_t(x-y)\|^2}{2p(\theta-1)}   \right)
        (\E(\rho_t^x)^{p'} )^{\alpha/p'}
         (\E (\rho_t^y)^{1-q'} )^{\alpha/q'}.
\]
This implies the desired Harnack inequality according to the
following Lemma \ref{L4.2}.

Now, assume that $\GG_t$ is bounded for $t>0$ and assume that
$P_t^F$ is a semigroup. Let $f\in \BB_b^+(\H).$ By the first
assertion and (\ref{GGG}), for any $\aa>1$ there exist constants
$t_\aa, c_\aa>0$ and a positive function $H_\aa$ on $(0,t_\aa)$ such
that
\begin{equation}\label{LLL}P_t^Ff(x)\le (P_t^Ff^\aa(y))^{1/\aa} \e^{c_\aa t+
\|x-y\|^2H_\aa(t)},\ \ \ t\in (0,t_\aa].\end{equation} Then, for any
$t>0$, \beg{equation*}\beg{split}\limsup_{x\to y} P_t^Ff(x)&\le
\limsup_{\aa\to 1}\limsup_{s\to 0}\limsup_{x\to y}\big\{
P_s^F(P_{t-s}^Ff)^\aa(y)\big\}^{1/\aa} \e^{c_\aa s
+\|x-y\|^2H_\aa(s)}\\
&\le \limsup_{\aa\to 1}\limsup_{s\to 0}\limsup_{x\to y}\big\{
P_t^F f^\aa(y)\big\}^{1/\aa} \e^{c_\aa s +\|x-y\|^2H_\aa(s)}= P_t^F f
(y).\end{split}\end{equation*}
On the other hand, (\ref{LLL}) also
implies
\beg{equation*}\beg{split}P_t^Ff(x)&\ge \big\{P_s^F
(P_{t-s}^Ff)^{1/\aa}(y)\big\}^\aa\e^{-\aa c_\aa s-\aa
H_\aa(s)\|x-y\|^2}\\
&\ge \big\{P_t^Ff^{1/\aa}(y)\big\}^\aa\e^{-\aa c_\aa s-\aa
H_\aa(s)\|x-y\|^2}, \ \ \ s\in (0,t_\aa).\end{split}\end{equation*}
So, by first letting $x\to y$ then $s\to 0$ and finally $\aa\to 1$
we arrive at
$$\liminf_{x\to y} P_t^Ff(x)\ge P_t^Ff(y).$$ Therefore, $P_t^Ff$ is
continuous on $\H$.
\end{proof}

\beg{lem} \label{L4.2} Assume $(\ref{4.2})$. For any $p>1$, $\dd>0$
and $x\in\H$, then
\begin{equation*} \beg{split} &\E (\rr_t^x)^p \le
(C_{p,k_2}(t))^{1/2}
\exp\Big( \ff { p(2p-1)} 2\int_0^t(k_1+2k_2\|T_sx\|^2)\d s\Big)\\
&\E(\rr_t^x)^{-\dd}\le  (C_{\dd,k_2}(t))^{1/2} \exp\Big( \ff {
\dd(2\dd+1)} 2 \int_0^t(k_1+2k_2\|x\|^2)\d
s\Big).\end{split}\end{equation*}
\end{lem}

\begin{proof} According to the proof of \cite[Theorem 10.20]{DZ92},
for any $\ll\in \R$, the process

$$t\mapsto \exp\bigg[\ll \int_0^t \<\psi_x(s),dW_s\>- \ff {\ll^2} 2 \int_0^t \|\psi_x(s)\|^2\d
s\bigg]$$ is  a martingale. So,

%    Let $\delta>1$ and denote $\delta'=\frac{\delta}{\delta-1}$.
\[
    \begin{aligned}
           &\E(\rho_t^x)^p\\
        =&\E \exp\left(p  \int_0^t\<\psi_x(s),dW_s\>- p^2 \int_0^t \|\psi_x(s)\|^2\d s\right)
                \exp\left(  \frac{p(2p-1)}{2} \int_0^t \|\psi_x(s)\|^2\d s \right)\\
        \leq & \left[ \E \exp\left(2 p  \int_0^t\<\psi_x(s),dW_s\>
                -2 p^2 \int_0^t \|\psi_x(s)\|^2ds\right) \right]^{1/2}\\
            & \quad\quad\quad\quad\quad\quad\quad\quad\
            \cdot \left[
                \E \exp\left(   p(2 p-1)  \int_0^t \|\psi_x(s)\|^2ds \right)
            \right]^{1/2}\\
            = & \left[\E \exp\left(   p(2 p-1)  \int_0^t \|\psi_x(s)\|^2ds \right)
            \right]^{1/2}.
    \end{aligned}
\]
This implies the first inequality since (\ref{4.2}) and the
boundedness  of $T_s$ imply
$$\|\psi_x(s)\|^2\le k_1 + 2k_2\|W_A(s)\|^2 +2k_2
\|x\|^2.$$
Similarly, the second inequality follows by noting that
\[
    \begin{aligned}
           &\E(\rho_t^x)^{-\dd}\\
        =&\E \exp\left(-\dd  \int_0^t\<\psi_x(s),dW_s\>- \delta^2 \int_0^t \|\psi_x(s)\|^2\d s\right)
                \exp\left(  \frac{\dd(2\dd+1)}{2} \int_0^t \|\psi_x(s)\|^2\d s \right)\\
        \leq & \left[ \E \exp\left(-2 \dd  \int_0^t\<\psi_x(s),dW_s\>
                -2 \dd^2 \int_0^t \|\psi_x(s)\|^2ds\right) \right]^{1/2}\\
            & \quad\quad\quad\quad\quad\quad\quad\quad\
            \cdot \left[
                \E \exp\left(   \dd(2 \dd+1)  \int_0^t \|\psi_x(s)\|^2ds \right)
            \right]^{1/2}\\
            = &  \left[
                \E \exp\left(   \dd(2 \dd+1)  \int_0^t \|\psi_x(s)\|^2ds \right)
            \right]^{1/2}.
    \end{aligned}
\]
\end{proof}
 \beg{thebibliography}{99}

%\bibitem{App06}
%D. Applebaum, \emph{Martingale-valued measures,
%{O}rnstein-{U}hlenbeck processes with
%  jumps and operator self-decomposability in {H}ilbert space,} In memoriam
%  {P}aul-{A}ndr\'e {M}eyer: {S}\'eminaire de {P}robabilit\'es {XXXIX}, Vol.
%  1874 of Lecture Notes in Mathematics, Springer, Berlin, 2006, pp. 171--196.

\bibitem{ATW06}
M. Arnaudon, A. Thalmaier, F.-Y. Wang, \emph{Harnack inequality and
heat kernel
  estimates on manifolds with curvature unbounded below,} Bull. Sci. Math.
  130(2006), 223--233.

\bibitem{BGL}
S.G. Bobkov, I. Gentil, M. Ledoux, \emph{Hypercontractivity of
{H}amilton-{J}acobi
  equations,} J. Math. Pures Appl. (9) 80(2001), 669--696.

\bibitem{DRW09}
G. Da Prato, M. R\"ockner, F.-Y. Wang, \emph{Singular stochastic
equations on Hilbert
  spaces: Harnack inequalities for their transition semigroups,} J. Funct.
  Anal. 257(2009), 992--1017.

\bibitem{DZ92}
G.~Da~Prato, J.~Zabczyk, \emph{Stochastic equations in infinite
dimensions,} Vol.~44
  of Encyclopedia of Mathematics and its Applications, Cambridge University
  Press, Cambridge, 1992.

  \bibitem{DZ02}
G.~Da~Prato and J.~Zabczyk, \emph{Second order partial differential
equations
  in {H}ilbert spaces}, London Mathematical Society Lecture Note Series, vol.
  293, Cambridge University Press, Cambridge, 2002.

\bibitem{FR00}
M.~Fuhrman and M.~R{\"o}ckner, \emph{Generalized {M}ehler semigroups: the
  non-{G}aussian case}, Potential Anal. \textbf{12} (2000), no.~1, 1--47.

\bibitem{LR04}
P.~Lescot and M.~R{\"o}ckner, \emph{Perturbations of generalized
{M}ehler
  semigroups and applications to stochastic heat equations with {L}\'evy noise
  and singular drift}, Potential Anal. 20(2004), 317--344.

\bibitem{LW08} W. Liu and F.-Y. Wang, \emph{Harnack inequality and strong
 Feller property for stochastic fast-diffusion equations,} J. Math. Anal. Appl. 342(2008), 651-662.

\bibitem{OV} F. Otto and C. Villani, \emph{Generalization of an inequality
by Talagrand and links with the logarithmic Sobolev inequality,} J.
Funct. Anal.  173(2000), 361--400.

 \bibitem{OY}
S.-X. Ouyang, \emph{Harnack inequalities and applications for
stochastic equations,}
  Ph.D. thesis, Bielefeld University, available on  http://bieson.ub.uni-
  bielefeld.de/volltexte/2009/1463/pdf/ouyang.pdf (2009).

\bibitem{RW03}
M.~R{\"o}ckner, F.-Y. Wang, \emph{Harnack and functional
inequalities for generalized
  {M}ehler semigroups,} J. Funct. Anal. 203(2003), 237--261.

\bibitem{W97}
F.-Y. Wang, \emph{Logarithmic {S}obolev inequalities on noncompact
{R}iemannian
  manifolds,} Probab. Theory Related Fields 109(1997), 417--424.

\bibitem{W07}
F.-Y. Wang, \emph{Harnack inequality and applications for stochastic
generalized
  porous media equations,} Ann. Probab. 35(2007), 1333--1350.

\bibitem{W09} F.-Y. Wang, 
 \emph{Heat kernel inequalities for convexity of 
manifold and curvature condition,} 2009.

\bibitem{Zab08}
J.~Zabczyk, \emph{Mathematical Control Theory,} Modern Birkh\"auser
Classics,
  Birkh\"auser Boston Inc., Boston, MA, 2008, an introduction, Reprint of the
  1995 edition.

 \end{thebibliography}

\end{document}